\newcommand {\matr}[2]{\left[\begin{array}{#1}#2\end{array}\right]}
\definecolor{wheat}{rgb}{0.96,0.87,0.70}
\definecolor{mario}{rgb}{0.8,0.8,1}
\definecolor{themis}{rgb}{0.8,1,0.8}
\definecolor{paolo}{rgb}{0.8,0.8,0.8}
\definecolor{henk}{rgb}{1,0.8,0.8}
\newcommand{\E}{\mathbb{E}}
\newcommand{\tr}{\mathrm{tr}}
\newtheorem{Theorem}{Theorem}
\newtheorem{Proposition}[Theorem]{Propositon}
\newtheorem{Assumption}[Theorem]{Assumption}
\newtheorem{Definition}[Theorem]{Definition}
\title{\LARGE \bf {Optimal Scheduling of Downlink Communication for a Multi-Agent System with a Central  Observation Post}}
\author{Mario Zanon, Themistoklis Charalambous, Henk Wymeersch and Paolo Falcone
\thanks{This work was supported by Copplar (project number 32226302), the Swedish Research Council (VR, grant number 2012-4038) and the European Commission Seventh Framework (AdaptIVe, grant number 610428).
M. Zanon, H. Wymeersch and P. Falcone are with the Department of Signals and Systems, Chalmers University of Technology, G\"{o}teborg, Sweden. e-mail: \{name.surname@chalmers.se\}.
T. Charalambous is with the Department of Electrical Engineering and Automation, School of Electrical Engineering, Aalto University, Espoo, Finland. e-mail: \{name.surname@aalto.fi\}.}%
}
\begin{document}

\maketitle
\thispagestyle{empty}
\pagestyle{empty}


%
%
%
%
\begin{abstract}
In this paper, we consider a set of agents, which may receive an observation of their state by a central observation post via a shared wireless network. The aim of this work is to design a scheduling mechanism for the central observation post to decide how to allocate the available communication resources. The problem is tackled in two phases: (i) first, the local controllers are designed so as to stabilise the subsystems for the case of perfect communication; (ii) second, the communication schedule is decided with the aim of maximising the stability of the subsystems. To this end, we formulate an optimisation problem which explicitly minimises the Lyapunov function increase due to communication limitations. We show how the proposed optimisation can be expressed in terms of Value of Information (VoI), 
we prove Lyapunov stability in probability and we test our approach in simulations.
\end{abstract}

%
%
%
%
\section{Introduction}\label{sec:intro}

The advancement of smart devices with increasing \emph{sensing}, \emph{computing} and \emph{control} capabilities makes it possible for several processes to become more intelligent, energy-efficient, safe and secure. However, the components of such systems are spatially distributed and communication between smart devices (being sensors, actuators or controllers) is mainly supported by a \emph{shared, wireless} communication network. These systems are known as Wireless Networked Control Systems (WNCSs); a thorough literature review can be found in \cite{Gupta2010,Zhang2013}. The use of wireless communications in such systems, however, introduces additional challenges due to the limitations imposed by the use of the wireless medium, e.g. packet losses, data rate constraints and interference. Therefore, it is necessary to develop Medium Access Control (MAC) mechanisms for WNCSs.

There are two types of MAC: (a) random access mechanisms, in which each agent can access the network randomly, and (b) scheduling mechanisms, in which a centralised entity decides on the allocation of the resources. Even though random access protocols can easily be implemented in a distributed fashion, it is difficult to provide any performance guarantee.

In this work, we consider a scenario in which several agents may receive an observation of their state by a central observation post, that has the ability to observe the states of all the agents in the system, via a shared wireless network. The shared wireless network is interference-limited, thus allowing only a limited number of simultaneous transmissions. Additionally, these transmissions may not reach the agents since the communication channel can be unreliable. Such setups may arise in several realistic occasions, e.g. when a rescue operation takes place, a drone observes the scene and informs the agents by broadcasting their location with respect to the target mission, one agent at the time. In such cases 
on the one hand the local control schemes are required to stabilise the subsystems (agents); on the other hand, a scheduling mechanism must decide which information is sent to which subsystem. 


For discrete-time linear systems, several scheduling approaches have been proposed in order to make a good use of the communication channel, see e.g.~\cite{Molin2015,Mamduhi2015,Soleymani2016} and references therein. These works
consider the scheduling problem for state estimation of LTI systems with Gaussian noise. With a few exceptions (e.g., \cite{Tomlin2014,Sinopoli2014}), a finite time horizon is considered, in which the problem is a combinatorial optimisation one \cite{Vitus2012}, and hence $\mathcal{NP}$-hard, making the computation of the globally optimal solution over long time horizons computationally expensive.
The works considering the infinite horizon case, focus on estimation only. 

In this paper, we decouple the control and communication allocation problems: (i) first, the local controllers are designed, so as to stabilise the subsystems in the perfect communication case; (ii) second, the communication schedule is decided with the aim of maximising the stability of the subsystems. To this end, we formulate an optimisation problem which explicitly minimises the Lyapunov function increases due to communication limitations. We show how the proposed optimisation can be expressed in terms of VoI which, in the considered case, resembles the concept of cost-to-go used in dynamic programming, and we prove Lyapunov stability in probability. Finally, we rely on a numerical approach for mixed-integer optimal control for cheaply solving the problem to local optimality.

The proposed approach is rather flexible, as it easily adapts to different formulations and can explicitly account for lossy communication channels, so as to (a) exploit knowledge on the probability of having a successful communication and (b) adapt the schedule whenever a message fails to be delivered. Moreover,
since the choice of Lyapunov functions is not unique, there is some freedom in the definition of the cost which allows one to e.g. assign higher priority to some subsystem.

The rest of the paper is organised as follows. In Section~\ref{sec:formulation} we give some preliminary results and the problem formulation. In Section~\ref{sec:stability} we present our main theoretical contribution, i.e. the formulation of the communication allocation problem. 
In Section~\ref{sec:results} we evaluate our theoretical developments in a series of different scenarios. Finally, in Section~\ref{sec:conclusions} we summarise our results and discuss possible future directions.

%
%
%
%
\section{Preliminaries and Problem Formulation}
\label{sec:formulation}

We consider a network consisting of $M$ agents with unreliable communication links from the sensors to the observer and controller, and perfect communication links from the controller to the agents, as depicted in Figure~\ref{fig:scheme}.



\subsection{Definitions}
Throughout the paper, we denote agents by index $i$ and time instants by index $k$, such that each agent has state $x_{i,k}$ and control $u_{i,k}$. Whenever it is not necessary, we drop index $i$ for simplicity of notation.

We consider agents described as perturbed linear systems 
\begin{align}\label{eq:plant}
x_{k+1} = Ax_k + Bu_k + w_k,
\end{align}
where $x$, $u$ and $w$ denote the state, control and perturbation, respectively. We assume that $x_0$ and $w$ are Gaussian random variables with zero mean and covariance $X_0=\E\{x_0x_0^\top\}$ and $W=\E\{ww^\top\}$. Moreover, we assume that $x_0$ and $w$ are independent, such that $\E\{x_0w^\top\}=0$.

The observations of the states are given by
\begin{align}\label{eq:sensor}
y_k=Cx_k+v_k
\end{align}
where $v$ denotes the measurement noise, which we assume to be a Gaussian random variable with zero mean and covariance $\E\{vv^\top\}=V$. Moreover, we assume that $w$, $x_0$, $v$ are mutually independent, i.e. $\E\{wv^\top\}=\E\{x_0 v^\top\}=0$.


\subsection{Kalman Filter Updates}

Let $\delta_k =1$ if the observation is sent to the agent, $\delta_k =0$ otherwise.
The \emph{a posteriori} state estimate given by the Kalman filter~\cite{Sinopoli2004} is 
\begin{align}
\label{eq:kalman_estimate}
\hat x_{k+1} &= A \hat x_k + Bu_k + \delta_{k+1} L_{k+1} ( y_{k+1} - C(A\hat x_k +Bu_k)),
\end{align}
with 
\begin{subequations}
	\label{eq:kalman_upd}
	\begin{align}
		\bar E_{k+1} &= A E_k A^\top + W ,\\
		L_{k+1} &= \bar E_{k+1} C^\top\left ( V+C\bar E_{k+1} C^\top \right )^{-1}, \\
		E_{k+1} &= \left (I-\delta_{k+1} L_{k+1}C\right )\bar E_{k+1}, 	\label{eq:error_kalman_cov}
	\end{align}
\end{subequations}
where the estimation error is defined as $e_k=x_k-\hat x_k$ and its covariance as $E=\E\{ee^\top\}$.

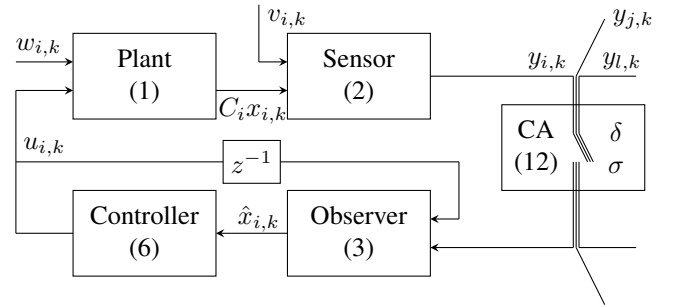
\begin{figure}[t]
	\begin{center}
		\begin{tikzpicture}[scale=0.38,>=stealth]
		\draw(0,-1.5) rectangle (5,1.5);
		\node at (2.5,0) [align=center] {Plant \\ \eqref{eq:plant}};
		\draw(7.5,-1.5) rectangle (12.5,1.5);
		\node at (10,0) [align=center] {Sensor \\ \eqref{eq:sensor}};
		\node at (6.3,-1.2) [align=left] {$C_ix_{i,k}$};
		\draw[->] (5,-0.5) --+(2.5,0);
		\draw[-] (12.5,0) --+(5,0);
		
		\draw[->] (6.5,0.5) --+(1,0);
		\draw[-] (6.5,0.5) --+(0,2);
		\node at (7.5,2) [align=left] {$v_{i,k}$};
		
		\node at (16.6,0.5) [align=left] {$y_{i,k}$};
		\draw[-] (17.5,0) --+(0,-2);
		\draw[-] (17.5,-2) --+(0.5,-1);
		\draw[-] (17.5,-3) --+(0,-3);
		
		\draw[-] (17.6,0) --+(1,2);
		\draw[-] (17.6,0) --+(0,-2);
		\draw[-] (17.6,-2) --+(0.5,-1);
		\draw[-] (17.6,-3) --+(0,-3);
		\draw[-] (17.6,-6) --+(1,-2);
		\node at (19.6,2) [align=left] {$y_{j,k}$};
		
		\draw[-] (17.7,0) --+(0,-2);
		\draw[-] (17.7,-2) --+(0.5,-1);
		\draw[-] (17.7,-3) --+(0,-3);
		\draw[-] (17.7,0) --+(2,0);
		\draw[-] (17.7,-6) --+(2,0);
		\node at (19.2,0.5) [align=left] {$y_{l,k}$};
		
		\node at (16.2,-2.5) [align=center] {CA \\ \eqref{eq:allocation_Nsa}};
		\node at (19,-2.5) [align=center] {$\delta$ \\ $\sigma$};
		\draw(15,-4) rectangle (20,-1);
		\draw[->] (17.5,-6) --+(-5,0);
		\draw(0,-7) rectangle (5,-4);
		\node at (2.5,-5.5) [align=center] {Controller \\ \eqref{eq:controller}};
		\draw(7.5,-7) rectangle (12.5,-4);
		\node at (10,-5.5) [align=center] {Observer \\ \eqref{eq:kalman_estimate}};
		\draw[<-] (5,-5.5) --+(2.5,0);
		\draw[-] (0,-5.5) --+(-2,0);
		\node at (6.5,-5) [align=left] {$\hat x_{i,k}$};
		\draw[-] (-2,-5.5) --+(0,5);
		\draw[->] (-2,-0.5) --+(2,0);
		\node at (-1,-2.5) [align=left] {$u_{i,k}$};
		
		\draw[->] (-2,0.5) --+(2,0);
		\node at (-1.2,1) [align=left] {$w_{i,k}$};
		
		\draw[-] (-2,-3) --+(7.25,0);
		\draw[-] (7.25,-3) --+(6.25,0);
		\draw(5.25,-3.75) rectangle (7.25,-2.25);
		\node at (6.25,-3) [align=left] {$z^{-1}$};
		\draw[-] (13.5,-3) --+(0,-2);
		\draw[->] (13.5,-5) --+(-1,0);
		\end{tikzpicture}
	\end{center}
	\vspace{-1.5em}
	\caption{Scheme of the system: all agents share the same communication channel, which has limited resources. The Communication Allocation (CA) algorithm assigns the channel to a selected subset of the agents.}
	\label{fig:scheme}
\end{figure}

\subsection{Imperfect Communication}

In the previous subsection we implicitly assumed a perfect downlink, i.e., all messages sent by the central node to the agents arrive at destination. While this assumption might be reasonable for physically connected systems, it is not when the communication is wireless. 
In this case, one needs to further account for the probability that the communication does not succeed.

We assume in the following that the probability $\sigma$ of having a successful communication from the sensor to the observer (see Figure~\ref{fig:scheme}) is not correlated with the estimation error $e$. This is in general not true, since the distance estimate is in general dependent on $e$. Our assumption then implicitly relies on the error $e$ being small enough. In the case of wideband communication, this requires the position estimate error to be no larger than a few meters. 
Note that $\sigma$ is typically also a function of the power of the transmission. In the following, we will assume that the distance and power are fixed and known a priori, such that $\sigma_k$ is a known time-varying signal.

By defining $s$ as a Bernoulli random variable with $\E\{s\}=\sigma$, 
%
Equation~\eqref{eq:error_kalman_cov} becomes
\begin{align}
E_{k+1}&= (I-\delta_{k+1} \sigma_k L_{k+1}C) 
\bar E_{k+1}.
\label{eq:exp_err_cov}
\end{align}
Note that $E$ defined by~\eqref{eq:exp_err_cov}, now denotes the expected covariance. 
We summarise the time evolution of 
$E$ 
as 
$E_{k+1}=f^\mathrm{E}_k(E_k,\delta_{k+1})$. This function is in general subsystem-dependent and it inherits the time-varying nature from $\sigma_k$.

\subsection{Feedback Control}

We consider the local linear feedback 
\begin{align}
	\label{eq:controller}
	u_k=-K\hat x_k.
\end{align} 
By defining $A_K\triangleq A-BK$, the state dynamics~\eqref{eq:plant} become $x_{k+1}= A_Kx_k + BK e_k + w_k$.

Note that, if we consider a quadratic cost (as we will do in Section~\ref{subsec:LQG}), then 
the so-called \textit{certainty equivalence principle} holds. It states that, the optimal solution is the same as for the corresponding deterministic problem as long as the disturbances present in the stochastic control system are zero mean \cite{bertsekas1976}.

%
%
%
%
\section{Stability with Limited Communication}\label{sec:stability}

In this section, we analyse the stability of the closed-loop system in the case of limited communication resources. 

Note that the stability of the system in terms of expected value is obtained regardless of communication, by choosing $K$ appropriately.
The time evolution of the expected values of the state and error are given respectively by
\begin{align*}
\E\{e_k\} &= 0, \\
\E\{x_{k+1}\} &= A_K\E\{x_k\}= A_K \hat x_k,
\end{align*}
where we assumed that $\hat x_0 = \E\{x_0\}$. 
Communication, however, is crucial in order to ensure that the error covariance $E$ does not grow indefinitely large.



We define the Lyapunov function of each subsystem as $\mathcal{V}(x) = x^\top P x$. 
We remark here that matrix $P$ can be easily computed by using the Lyapunov equation
\begin{align}
\label{eq:lyap}
A_K^\top P A_K - P + Q = 0,
\end{align}
with $Q \succ 0$. The freedom in choosing matrix $Q$ will later be exploited in order to tune the communication allocation as desired.


In order to select how to schedule the communication between the central node and the agents, we aim at defining the cost of communicating or not with each subsystem. In this sense, our definition resembles that VoI, already used in e.g.~\cite{Molin2015,Soleymani2016}. Because we find it relevant to relate the VoI to the stability of each subsystem, we first establish a framework aimed at building a cost directly related to the Lyapunov function of each subsystem. Second, we formulate the communication allocation problem in order to minimise the chosen cost function. Third, we establish a connection between our formulation and the definition of VoI. Finally, we prove Lyapunov stability in probability.

\subsection{Stabilising Feedback and its Cost}\label{subsec:LQG}

In this subsection, we aim at defining the cost of not communicating the state measurement or estimate. 
We express this cost based on the Lyapunov function of each subsystem in order to explicitly account for the suboptimality resulting from control actions based on imperfect estimation.
Because we consider each subsystem separately, we drop the index $i$ for simplicity.

In order to simplify the description, we first restrict our attention to the LQR case. In a second step, we will detail how, for any stabilising feedback matrix $K$, one can define LQR cost matrices that yield $K$ as optimal feedback matrix.

Consider an LQR with stage cost
\begin{align}
	\label{eq:stage_cost}
	\ell(x,u) = \matr{c}{x \\ u}^\top \matr{ll}{Q & S^\top \\ S & R}\matr{c}{x \\ u}.
\end{align}
The discrete-time algebraic Riccati equation (DARE) is
\begin{subequations}
	\label{eq:dare}
	\begin{align}
		0 &= Q - P + A^\top P A - (S^\top + A^\top P B)K, \\
		K &= (R+B^\top P B)^{-1}(S + B^\top PA).
	\end{align}
\end{subequations}
We now compare the cost of applying the feedback $u = -K\hat x$ to the cost of applying the nominal feedback $\bar u=-Kx$. We first note that
\begin{subequations}
	\begin{align}
		u &= -Kx + Ke = \bar u + K e, \\
		x_{k+1}^{\bar u} &= A_K x_k + w_k, \\
		x_{k+1}^u &= A_K x_k + BKe_k + w_k = x_{k+1}^{\bar u} + BKe_k.
	\end{align}
\end{subequations}
The effect of the estimation error over one prediction step is given by the following difference in closed-loop cost
\begin{align}
	\label{eq:lyap_diff}
	\Delta \mathcal{V}(x_k) =& \ell(x_k,-K\hat x_k) - \ell(x_k,-Kx_k) \nonumber \\
	&+ \left (x_{k+1}^{u}\right )^\top Px_{k+1}^u -\left (x_{k+1}^{\bar u}\right )^\top P x_{k+1}^{\bar u}.
\end{align}
By taking the expected value of the cost increase and noting that $\E\{e\}=0$, one obtains
\begin{align*}
	\E\{\Delta \mathcal{V}(x_k)\} &= \E\{e_k^\top K^\top RKe_k + e_k^\top K^\top B^\top PBKe_k\} \\
	&=  \mathrm{tr}\left ( K^\top\left ( R+B^\top PB\right )KE_k \right ).
\end{align*}
Because the loop is closed using the state estimate, the expected value of the increase in the Lyapunov function due to estimation error over a given time interval $[0,N]$ is then
\begin{align*}
	\sum_{k=0}^N \E\{\Delta \mathcal{V}(x_k)\} = \sum_{k=0}^N \mathrm{tr}\left ( \Gamma E_k \right ),
\end{align*}
with $\Gamma \triangleq K^\top\left ( R+B^\top PB\right )K$. Note that dynamic programming yields $\left ( R+B^\top PB\right ) \succ 0$ and, therefore, $\Gamma \succ 0$. Moreover, \eqref{eq:dare} yields $\Gamma = Q - P + A^\top P A$.


We now turn our attention to generalising the proposed framework to any stabilising feedback gain $K$.
\begin{Proposition}
	\label{prop:Kriccati}
	Given any linear system $x_{k+1}=Ax_k+Bu_k$ and stabilising feedback gain $K$, one can obtain the feedback gain $K$ as the solution of an LQR formulated using the stage cost $\ell(x,u)$ from~\eqref{eq:stage_cost} using matrices $R=I$, $S=K$, $Q=K^\top K$. 
%
\end{Proposition}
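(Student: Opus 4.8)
The plan is to exploit the fact that, with the proposed data $R=I$, $S=K$, $Q=K^\top K$, the stage cost \eqref{eq:stage_cost} is a perfect square:
\[
\ell(x,u)=x^\top K^\top K x+2u^\top Kx+u^\top u=(u+Kx)^\top(u+Kx)=\|u+Kx\|^2\ge 0,
\]
where I have used $S^\top=K^\top$. In particular the cost weighting matrix $\bigl[\begin{smallmatrix}Q & S^\top\\ S & R\end{smallmatrix}\bigr]=\bigl[\begin{smallmatrix}K^\top K & K^\top\\ K & I\end{smallmatrix}\bigr]$ is positive semidefinite (its Schur complement $K^\top K-K^\top K=0$ is), $R\succ0$, and $(A,B)$ is stabilisable because $K$ stabilises it; hence the infinite-horizon LQR with this stage cost is well posed and the DARE \eqref{eq:dare} admits a stabilising solution $P\succeq 0$ with optimal cost $x_0^\top Px_0$ from any initial state $x_0$.

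First I would identify $P$. Applying the admissible input sequence $u_k=-Kx_k$ drives the closed loop $x_{k+1}=(A-BK)x_k$ to the origin (since $A-BK$ is Schur by assumption) and makes $\ell(x_k,u_k)=0$ at every step, so the resulting total cost is $0$. Since $\ell\ge0$, this policy is optimal, and therefore $x_0^\top Px_0=0$ for every $x_0$, i.e. $P=0$. (As a by-product, because every term in the sum is nonnegative, any optimal input must satisfy $u_k=-Kx_k$, so the optimal feedback is exactly $K$, which is the statement to be proved; the remaining steps just confirm this through \eqref{eq:dare}.)

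Then I would close the argument by direct substitution of $P=0$ into the DARE. The gain equation gives
\[
K_{\mathrm{opt}}=(R+B^\top P B)^{-1}(S+B^\top P A)=I^{-1}(K+0)=K,
\]
and the Riccati equation reads
\[
Q-P+A^\top P A-(S^\top+A^\top P B)K_{\mathrm{opt}}=K^\top K-0+0-K^\top K=0,
\]
so $(P,K_{\mathrm{opt}})=(0,K)$ solves \eqref{eq:dare}; moreover it is the stabilising solution, since the associated closed-loop matrix $A-BK$ is Schur. This establishes that $K$ is the LQR-optimal feedback for the constructed cost.

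The only delicate point — more bookkeeping than genuine obstacle — is making sure we single out the correct (stabilising, positive-semidefinite) solution of the DARE rather than a spurious one, and that the standard existence hypotheses of infinite-horizon LQR are satisfied by the data $(A,B,Q,S,R)$ built here; both follow from $K$ being stabilising together with $Q\succeq0$, $R\succ0$. One could alternatively avoid invoking these hypotheses altogether and simply verify by inspection that $(P,K_{\mathrm{opt}})=(0,K)$ satisfies \eqref{eq:dare}, which is the shortest route to the claim.
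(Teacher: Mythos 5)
Your proof is correct, and its final step (substituting $P=0$ into~\eqref{eq:dare} to read off $K_{\mathrm{opt}}=K$) is exactly the verification the paper performs. Where you diverge is in how optimality of this solution is justified. The paper, after checking that $P=0$ and $R+B^\top PB=I\succ 0$ satisfy the DARE, simply invokes the standard fact that a DARE admits at most one \emph{stabilising} solution and notes that the associated gain $K$ is stabilising by hypothesis. You instead observe that the chosen weights make the stage cost a perfect square, $\ell(x,u)=\|u+Kx\|^2$, so the policy $u=-Kx$ attains total cost zero while any deviation at any step incurs strictly positive cost; this identifies the value function as $x^\top 0\,x$ and the optimal feedback as $K$ directly, without appealing to DARE uniqueness theory. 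Your route is more self-contained and makes transparent \emph{why} $P=0$ (zero cost-to-go along the optimal trajectory), which is useful given that the effective state weight $Q-S^\top R^{-1}S=0$ is degenerate and the usual detectability hypotheses behind existence/uniqueness results do not obviously apply; the paper's route is shorter but leans on that uniqueness fact. Your closing caveat about singling out the stabilising solution is precisely the point the paper's one-line appeal to uniqueness is meant to dispatch, so the two proofs complement each other there.
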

\begin{proof}
	The proposed matrices solve the DARE~\eqref{eq:dare} with $P=0$ and $R+B^\top PB \succ 0$.
	The proof is then obtained by noting that the DARE admits at most one stabilising solution, i.e. $K$, which is stabilising by assumption. 
\end{proof}
Note that the LQR obtained by applying Proposition~\ref{prop:Kriccati} has a positive semi-definite stage cost and yields $\Gamma = K^\top K$. If a positive-definite formulation is sought, the technique proposed in~\cite{Zanon2014d} can be applied. Alternatively,~\eqref{eq:lyap} can be solved and $\Gamma = Q - P + A^\top P A$ can be used.

\subsection{Optimal Communication Allocation}

After analysing each subsystem separately, we consider the aggregate cost of communication, resulting from all subsystems considered as one system. While the design of each agent's controller can be done separately, it is important to note that some degrees of freedom in the cost definition are free and should be exploited in order to define the communication cost. 
The simplest observation is that by multiplying the LQR tuning matrices by a factor $a_i \neq 0$, the same feedback gain is obtained, but the Lyapunov function matrix becomes $a_i P_i$. This allows one to assign higher or lower importance to some subsystems, thus indicating one possibility to further tune the cost for the aggregate system. Another approach can be to design the cost of the single systems in an aggregate way, i.e. by considering them as a single (sparse) system.

We remark here that one tempting approach could consist in applying~\cite[Lemma 2]{Zanon2014d}, so as to assign an arbitrarily chosen value $P_i \succ 0$ the cost-to-go matrix of each subsystem's controller. While this is always possible, the relative stage cost is typically indefinite.
Consequently, the cost-to-go is not guaranteed to be a Lyapunov function: though positive definite, it typically fails to fulfil the decrease condition.

We formulate the communication allocation problem by summarising the previously described design choices 
in matrices $\Gamma_i \triangleq K_i^\top\left ( R_i+B_i^\top P_iB_i\right )K_i$. 
The optimal allocation problem at time $j$ can then be formulated as
\begin{subequations}
	\label{eq:allocation_Nsa}
	\begin{align}
	\min_{\delta} \ \ & \sum_{i=1}^{M} \sum_{k=j}^{N+j} \mathrm{tr}(\Gamma_{i} E_{i,k}) \\
	\mathrm{s.t.} \ \ 
	& E_{i,k} = f^\mathrm{E}_{i,k}(E_{i,k-1},\delta_{i,k}),&& i\in\mathbb{I}_1^M,\, k\in\mathbb{I}_{j}^{N+j}, \label{eq:Edyn}\\ 
	& \delta_{i,k} = \{0,1\},&& i\in\mathbb{I}_1^M,\, k\in\mathbb{I}_j^{N+j}, \\
	& \sum_{i=1}^{M} \delta_{i,k} \leq \gamma,&&\phantom{i\in\mathbb{I}_1^M,\,} k\in\mathbb{I}_j^{N+j}, \label{eq:comm_limit}
	\end{align}
\end{subequations}
where we define $\mathbb{I}_a^b\triangleq\{ \ n\in\mathbb{N} \ | \ a \leq n \leq b \ \}$ and $\delta = (\delta_{1},\ldots,\delta_M)$ and $\delta_i=(\delta_{i,0},\ldots,\delta_{i,N})$. 


\subsection{The Value of Information}

In this subsection, we show the connection between our approach and the concept of \emph{value of information} (VoI), used in different settings in e.g.~\cite{Molin2015,Soleymani2016}. In the context of our setting, the VoI could be defined for agent $j$ as
\begin{subequations}
\label{eq:voi}
\begin{align}
&\nu_i(\delta_i,E_{i,j-1}) \triangleq  \sum_{k=j}^{N+j} \mathrm{tr}(\Gamma_{i} E_{i,k}) \hspace{-1em}\\
&\hspace{3em}\mathrm{s.t.} \ \ 
 E_{i,k} = f^\mathrm{E}_{i,k}(E_{i,k-1},\delta_{i,k}),&& k\in\mathbb{I}_{j}^{N+j}.
\end{align}
\end{subequations}

Then, Problem~\eqref{eq:allocation_Nsa} can be formulated in terms of VoI as
\begin{subequations}
\label{eq:allocation_Nsa2}
\begin{align}
\min_{\delta} \ \ & \sum_{i=1}^{M} \nu_i(\delta_i,E_{i,j-1}) \\
\mathrm{s.t.} \ \ 
& \delta_{i,k} = \{0,1\},&& k\in\mathbb{I}_j^{N+j}, \\
& \sum_{i=1}^{M} \delta_{i,k} \leq \gamma,&& k\in\mathbb{I}_j^{N+j}.
\end{align}
\end{subequations}
Problem~\eqref{eq:allocation_Nsa2} has the same complexity as~\eqref{eq:allocation_Nsa}. However, the problem of computing the VoI~\eqref{eq:voi} is very difficult and presents strong similarities to the computation of the cost-to-go function in dynamic programming. 

\subsection{Lyapunov Stability in Probability}

In this subsection, we analyse the conditions under which our approach yields closed-loop stability. The stability concept we will use is Lyapunov stability in probabilty~\cite{Kozin1969}. For the sake of notational simplicity, 
we consider the system as a whole, such that $x_k=(x_{1,k},\ldots,x_{M,k})$, and matrices $E$ and $P$ are defined consistently.
\begin{Definition}[Lyapunov Stability in Probability]
	Lyapunov Stability in Probability (LSP) holds for a linear system 
	if, given $P\succ0$, $\epsilon, \bar \epsilon$, there exists $\rho(\epsilon,\bar \epsilon)>0$ such that $|x_0|<\rho$ implies
	\begin{align*}
		\limsup_{k\rightarrow\infty} \mathrm{P}[x_k^\top P x_k \geq \epsilon ]\leq \bar \epsilon.
	\end{align*}
\end{Definition}


\begin{Assumption}
	\label{ass:schedule}
	There exists a baseline schedule $\delta^\mathrm{bs}$ over the time interval $[0,N]$ yielding error covariance $E_k^\mathrm{bs}$ such that
	 $\forall \ E_0 \ \in \mathcal{E} \triangleq \{ E \ | \ \mathrm{tr}(\Gamma E) \leq \mu \in \mathbb{R} \}$ it holds that $E_k^\mathrm{bs}\in\mathcal{E}$, $k=1,\ldots,N$. 
\end{Assumption}

\begin{Theorem}
	\label{thm:lsp}
	Assume that the initial state covariance, the noise covariances $W$, $V$ and the initial estimation error are finite. Suppose moreover that Assumption~\ref{ass:schedule} holds. Then, the closed-loop system with communication allocation based on the solution of Problem~\eqref{eq:allocation_Nsa} is LSP.
\end{Theorem}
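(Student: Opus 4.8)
The plan is to use $\mathcal{V}(x)=x^\top P x$ with $P$ obtained from the Lyapunov equation~\eqref{eq:lyap} (for an arbitrary $Q\succ0$) as a stochastic Lyapunov function and to show that along the closed loop $x_{k+1}=A_Kx_k+BKe_k+w_k$ its expectation obeys a contraction-plus-bounded-disturbance recursion. First I would take expectations in $\mathcal{V}(x_{k+1})$: since $w_k$ is zero mean and independent of $(x_k,e_k)$, and since the Kalman estimate is orthogonal to the estimation error so that $\E\{e_kx_k^\top\}=E_k$, every cross term collapses to a trace against $E_k$, and substituting $A_K^\top PA_K=P-Q$ gives
\begin{align*}
\E\{\mathcal{V}(x_{k+1})\}=\E\{\mathcal{V}(x_k)\}-\E\{x_k^\top Q x_k\}+c_k,
\end{align*}
with $c_k\triangleq 2\,\mathrm{tr}(A_K^\top P B K E_k)+\mathrm{tr}(K^\top B^\top PBK E_k)+\mathrm{tr}(PW)$, i.e. a quantity affine in $E_k$ plus the finite constant $\mathrm{tr}(PW)$. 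Using $\E\{x_k^\top Q x_k\}\ge\beta\,\E\{\mathcal{V}(x_k)\}$ with $\beta\triangleq\min\{1,\lambda_{\min}(Q)/\lambda_{\max}(P)\}\in(0,1]$, the whole question reduces to showing that $c_k$ is uniformly bounded, i.e. that the error covariances $E_k$ stay in a bounded set.

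Second, I would get exactly this from Assumption~\ref{ass:schedule}. Since the initial estimation error is finite and $\Gamma\succ0$, we take $E_0\in\mathcal{E}$; then the baseline schedule $\delta^\mathrm{bs}$ is feasible for Problem~\eqref{eq:allocation_Nsa} over the first window and keeps $\mathrm{tr}(\Gamma E_k^\mathrm{bs})\le\mu$, so the optimal schedule—being no worse than $\delta^\mathrm{bs}$—has objective at most $(N+1)\mu$ on that window, whence each individual term $\mathrm{tr}(\Gamma E_k)\le(N+1)\mu$; since $\Gamma\succ0$ this bounds $\mathrm{tr}(E_k)$, hence $\|E_k\|$, hence $c_k\le\bar c<\infty$. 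The \textbf{main obstacle} is making this argument uniform in $k$: the optimizer only controls the \emph{sum} of $\mathrm{tr}(\Gamma E_k)$ over a receding window, not each term, so there is no immediate guarantee that the running covariance re-enters $\mathcal{E}$ at the next re-solve instant and that $\delta^\mathrm{bs}$ remains feasible/useful there—the naive induction only propagates the weaker bound $(N+1)\mu$, not membership in $\mathcal{E}$. I would close this by exploiting the monotonicity of $f^\mathrm{E}_k$ in $\delta$ (more communication can only shrink $E$, so the optimal trajectory is worst-case dominated by the baseline along the window, preventing early overshoot), and, if that proves insufficient, by a mild strengthening of Assumption~\ref{ass:schedule} so that $\mathcal{E}$ is absorbing from the larger sublevel set $\{E:\mathrm{tr}(\Gamma E)\le(N+1)\mu\}$; this is the one step I expect to require care.

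Finally, with $\E\{\mathcal{V}(x_{k+1})\}\le(1-\beta)\E\{\mathcal{V}(x_k)\}+\bar c$ in hand, iterating gives $\E\{\mathcal{V}(x_k)\}\le(1-\beta)^k\E\{\mathcal{V}(x_0)\}+\bar c/\beta$. The transient term carries the dependence on the initial condition: for $|x_0|<\rho$ and finite initial state covariance, $\E\{\mathcal{V}(x_0)\}$ is controlled by $\rho$ and decays geometrically, while the residual level $\bar c/\beta$ is finite and fixed (the unavoidable noise floor). Applying Markov's inequality, $\mathrm{P}[x_k^\top P x_k\ge\epsilon]\le\E\{\mathcal{V}(x_k)\}/\epsilon$, so that $\limsup_{k\to\infty}\mathrm{P}[x_k^\top P x_k\ge\epsilon]\le\bar c/(\beta\epsilon)$; choosing $\rho$ small enough to dominate the transient and invoking the stochastic Lyapunov criterion of~\cite{Kozin1969} then yields LSP. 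Along the way I would verify the benign measure-theoretic points—that $\E\{e_kx_k^\top\}=E_k$ survives the extra averaging over the Bernoulli success variables $s_k$, and that $\E\{\mathcal{V}(x_k)\}<\infty$ for every $k$, which follows from the recursion itself.
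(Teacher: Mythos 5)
Your proposal follows essentially the same route as the paper's proof: Markov's inequality applied to $\E\{x_k^\top P x_k\}$, a recursion for $\tr(PX_{k+1})$ obtained from $x_{k+1}=A_Kx_k+BKe_k+w_k$ together with the Lyapunov equation~\eqref{eq:lyap} to get the contraction factor, a bound on the $E_k$-dependent term via optimality against the baseline schedule of Assumption~\ref{ass:schedule}, and a geometric-series limit. Two remarks. First, you keep the cross term $2\,\tr(A_K^\top PBKE_k)$ arising from $\E\{e_kx_k^\top\}=E_k$ (orthogonality gives $\E\{e_k\hat x_k^\top\}=0$, not $\E\{e_kx_k^\top\}=0$); the paper's recursion records the $E_k$-contribution as $\tr(\Gamma E_k)$ alone and silently drops this term, so on this point you are the more careful of the two. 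Second, the ``main obstacle'' you flag is genuine and is \emph{not} resolved in the paper either: optimality only bounds the windowed sum $\sum_{j=k}^{k+N}\tr(\Gamma E_j)$ by $(N+1)\mu$, which does not imply that the realised covariance re-enters $\mathcal{E}$ at the next re-solve instant, yet that membership is exactly what Assumption~\ref{ass:schedule} requires for the baseline schedule to be usable again; the paper simply asserts ``Since $E_N\in\mathcal{E}$, \dots\ the problem is recursively feasible'' without justifying why $E_N\in\mathcal{E}$ under the optimal rather than the baseline schedule. Your proposed repairs (monotonicity of $f^\mathrm{E}$ in $\delta$, or strengthening the assumption so that $\mathcal{E}$ is absorbing from the sublevel set $\{E \,|\, \tr(\Gamma E)\le(N+1)\mu\}$) are sensible ways to close a gap that the published argument leaves open.
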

\begin{proof}
	Since $E_N\in\mathcal{E}$, by applying the baseline schedule, $E_{N+1}\in\mathcal{E}$ and the problem is recursively feasible. 
	
	We apply Markov's inequality to obtain the upper bound
	\begin{align}
		\mathrm{P}[x_k^\top P x_k \geq \epsilon ]\leq \frac{\E\{x_k^\top P x_k\}}{\epsilon}.
	\end{align}
	We define $X_k \triangleq \E\{x_kx_k^\top\}$
	and write
	the time evolution of $\E\{x_k^\top P x_k\}=  \tr(P X_{k})$ as
	\begin{align*}
		\E\{x_{k+1}^\top P x_{k+1}\} =\, & \tr(P X_{k+1}) \\
		=\,&\tr(A_K^\top P A_K X_k) + \tr(P W) + \tr(\Gamma E_k).
	\end{align*}
	Since $A_K$ is stable, using~\eqref{eq:lyap} we obtain $\tr(A_K^\top P A_K X_k)=\tr( (P-Q) X_k)\leq\alpha \, \tr( P X_k) $, with $0\leq\alpha <1$.

	By optimality $\sum_{j=k}^{k+N}\mathrm{tr}( \Gamma E_j)\leq \sum_{j=k}^{k+N}\mathrm{tr}( \Gamma E_j^\mathrm{bs})$, and we conclude that  
	$$\mathrm{tr}( \Gamma E_k)\leq N\sum_{j=k}^{k+N}\mathrm{tr}( \Gamma E_j^\mathrm{bs}) \leq N\mu.$$
	This further yields the upper bound $\tr(P W) + \tr(\Gamma E_k) \leq \nu$, where $\nu \triangleq \tr(PW) + N\mu < \infty$, such that
	\begin{align*}
		\tr(P X_{k+1}) \leq \alpha \, \tr( P X_k) + \nu.
	\end{align*}
	Consequently, $\displaystyle \limsup_{k\rightarrow\infty} \, \tr( P X_k) \leq \frac{\nu}{1-\alpha}\leq \nu < \infty$. 
\end{proof}

%
%
%
%
\section{Numerical Results}
\label{sec:results}

In this section, we test the theoretical developments of the previous sections in a series of different scenarios.
We consider subsystems with 
\begin{align}
\label{eq:subsystems_example}	
A_i = \matr{cc}{1&0.1\\ 0 & 1}, && B_i = \matr{c}{0.005 \\0.1}.
\end{align}
Unless specified differently, we control each subsystem by an LQR controller designed using weighting matrices $Q_i=I$ and $R_i=0.01$. 

%
%
%
%
\subsection{Numerical Methods}
\label{sec:computations}

Problem~\eqref{eq:allocation_Nsa} is a mixed-integer optimal control problem (MIOCP) formulated in discrete time. An efficient approach, called \emph{partial outer convexification}, has been proposed in~\cite{Sager2009,Kirches2010y} to handle the numerical solution of MIOCPs. Consider a generic system with state $x$, continuous control $u$, binary control $\omega$ and system dynamics $x_+=f(x,u,\omega)$.
Partial outer convexification reformulates the dynamics as
$x_+=\sum_{j=1}^{n_\mathrm{c}} f(x,u,\omega_j)\tilde \omega_j,$
where, vectors $\omega_j$ are binary vectors spanning the space of all possible binary combinations of inputs, $n_\mathrm{c}$ is the amount of such combinations and $\tilde \omega_j$ is a newly introduced control variable. 
In order to solve the MIOCP, the integer constraint is relaxed to $\tilde \omega \in [0,1]^{n_\mathrm{c}}$ and the relaxed OCP is solved using standard techniques. This relaxation can be proven to be tighter than the relaxation of the original formulation $x_+=f(x,u,\omega)$ using $\omega \in [0,1]^{n_\mathrm{\omega}}$. 

Often the solution of the relaxed problem is integer. 
In all other cases, the continuous solution can be approximated by a switching integer solution 
by using a rounding scheme.
Note that, because the relaxed problem is nonconvex, the solver returns a local minimum. 
We refer to~\cite{Sager2009,Kirches2010y} and references therein for more details on the topic.

We remark that the problem formulations used in this paper are already in their partial outer convexification form. 
We solve the optimisation problems using Casadi~\cite{Andersson2013b} and Ipopt~\cite{Waechter2006}.

\subsection{Identical Subsystems}
We simulate first the simplest case of identical subsystems with identical noise covariance. In this simple case, the optimal solution is to communicate with subsystem $i$ only after communication has happened with all subsystems $j\neq i$.


We considered $4$ identical agents with $W_i=10^{-2}I$, $V_i= 10^{-3}I$ and a prediction horizon $N\in[1,10]$ and $\gamma=1$. Due to the symmetry of the problem, the solver returned non-integer solutions in the beginning of the simulation and a rounding strategy was necessary in the beginning. Afterwards, integer solutions were always obtained and the periodic solution $\delta_{i,k}=1$ when $k\mod i=0$, $\delta_{i,k}=0$ otherwise, was obtained (modulo phase shifts).


\subsection{Non-Identical Subsystem with Perfect Communication}

We consider now the case of non-identical subsystems. In particular, we consider different numbers of agents $M$. 

The closed-loop cost for $\gamma=1$ is displayed in Figure~\ref{fig:cl_cost_perfect_comm} for different prediction horizons $N$ and amounts of subsystems $M$. For $M<5$, horizons $N>1$ did not seem to improve the closed-loop cost. For $M\geq5$, it can be seen that, while the optimiser often falls into local minima, the cost tends to decrease by using longer prediction horizons until $N=5$. This effect is more evident when the amount $M$ of subsystems becomes larger. In most cases, the closed-loop cost is less than $5\%$ higher than the best value found by the optimiser. 
For $M\leq9$ the solver always returned an integer solution. In all other simulations the solution needed to be rounded at most twice. Simulations with $\gamma>1$ showed similar trends. Finally, the closed-loop cost dependence on $\gamma$ is shown on the bottom graph in Figure~\ref{fig:cl_cost_perfect_comm}.

\begin{figure}
	\begin{center}
		\includegraphics[width=0.5\textwidth,trim= 0 20 0 25]{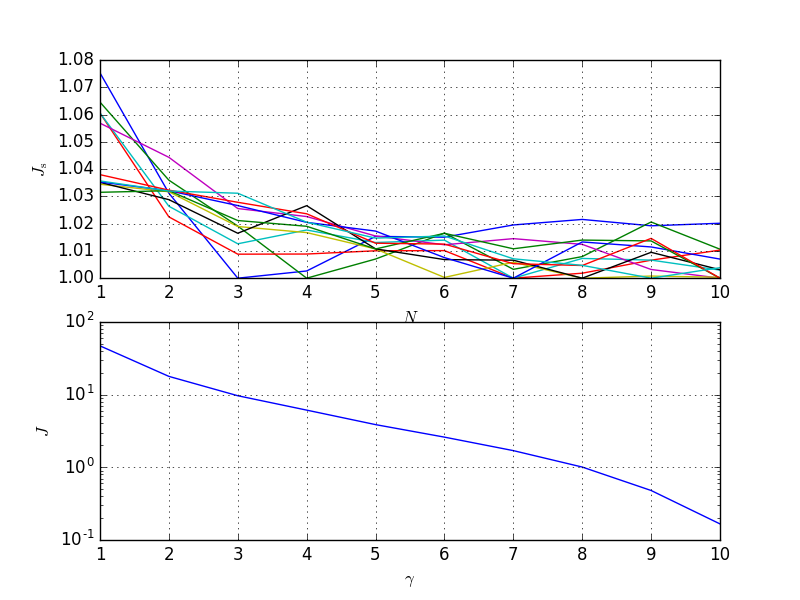}
		\caption{Top graph: closed-loop cost for $M\in[5,15], \ \gamma = 1$ as a function of the prediction horizon for several scenarios considering different numbers of agents. The value $J_\mathrm{s}$ denotes the closed-loop cost scaled around the minimum for each scenario.
		Bottom graph: Closed-loop cost $J$ for $M=10$ as a function of the amount of available communication $\gamma$ in~\eqref{eq:comm_limit}. }
		\label{fig:cl_cost_perfect_comm}
	\end{center}
\vspace{-1.5em}
\end{figure}




\subsection{Tuning the Cost of Communication}

In this subsection, we briefly present how the cost of communicating to one subsystem rather than to the others can be tuned. For the sake of simplicity, we consider two identical subsystems.

We use the following tuning for the LQR: $Q_1=I$ and $R_1=0.01$ and $Q_2=a^2Q_1$ and $R_2=a^2R_1$, where $a$ is our tuning parameter. As remarked before, this choice yields the same feedback gain for both systems, i.e. $K_1=K_2$. However, the cost-to-go matrices are $P_2=a^2P_1$, such that $\Gamma_2=a^2\Gamma_1$. As one could expect, the ratio $r=\sum_k \delta_{2,k}/\delta_{1,k}$ between the amount of communication with system $1$ and $2$ scales roughly linearly with increasing $a$. However, since the problem has an integer nature and the solvers only finds a local minimum the ratio evolves as displayed in Figure~\ref{fig:tuning}.

\begin{figure}
	\begin{center}
		\includegraphics[width=0.48\textwidth,clip,trim= 0 195 0 20]{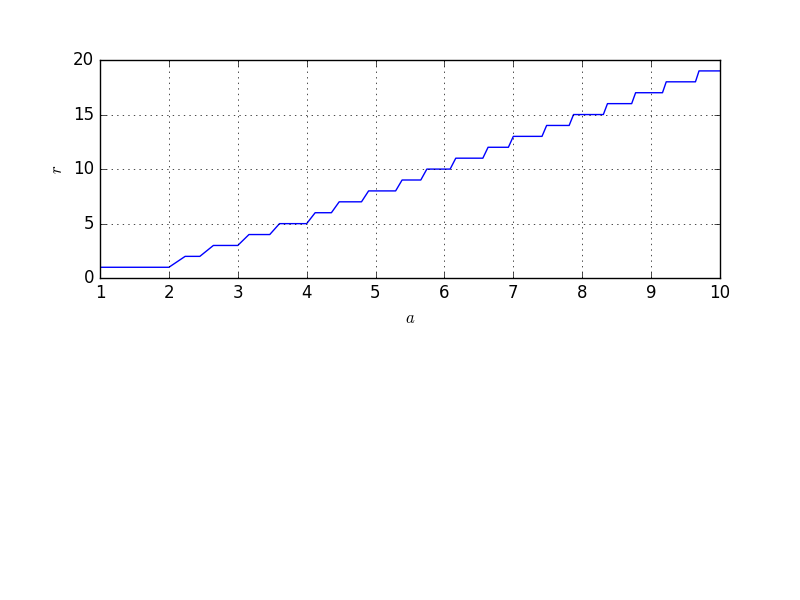}
		\caption{Ratio $r=\sum_k \delta_{2,k}/\delta_{1,k}$ as a function of parameter $a$.}
		\label{fig:tuning}
	\end{center}
\end{figure}


\subsection{Imperfect Communication}

We now consider a scenario with a lossy communication channel with probability of successful communication given by $p_{i,k} = e^{-d_{i,k}^2}$, where $d_{i,k}$ denotes the distance of agent $i$ from the central node at time $k$. We use identical systems, i.e. with variances given by
\begin{align*}
V_1 &= 10^{-3}I
, & W_1 &= 10^{-2}I
, &
V_2 &= V_1, &	W_2 &= W_1.
\end{align*}
We consider a scenario where $d_{i,k} = \cos \left ( 0.1 k + i \frac{\pi}{2} \right )$, such that when one system is at the farthest point, the other system is at the closest one. We run $100$ simulations over a horizon of $100$ steps. 
We compare the formulation explicitly accounting for lossy communication to the formulation assuming perfect communication and the baseline strategy of alternating communication between the two systems at each step, which is optimal in case of no packet loss.

We consider three scenarios, with decreasing probability of successful packet reception, as displayed in Figure~\ref{fig:prob}. The time evolution of the probability is periodic and the probability for the two systems is shifted by half a period. The resulting average closed-loop cost is displayed in FIgure~\ref{fig:cost_lossy}. We have normalised the cost with respect to the baseline strategy. For the first two scenarios, the prediction horizon does not seem to have an important role. It is interesting to note that, even if perfect communication is assumed, the algorithm uses the information on packet loss in order to improve the closed-loop cost. While in the first scenario including the information on the probability of packet loss does not improve the closed-loop cost, in the second scenario the improvement is significant. Finally, in the third and most challenging scenario, explicitly accounting for the probability of packet drop is crucial for performance, as not doing so significantly deteriorates the performance with respect to the baseline strategy. Note that in this last scenario, longer prediction horizons improve the performance. For prediction horizons $N>15$, i.e. longer than approximately half a period of the package drop probability variation, the performance does not improve further.

\begin{figure}
	\begin{center}
		\includegraphics[width=0.48\textwidth,clip,trim= 0 195 0 0]{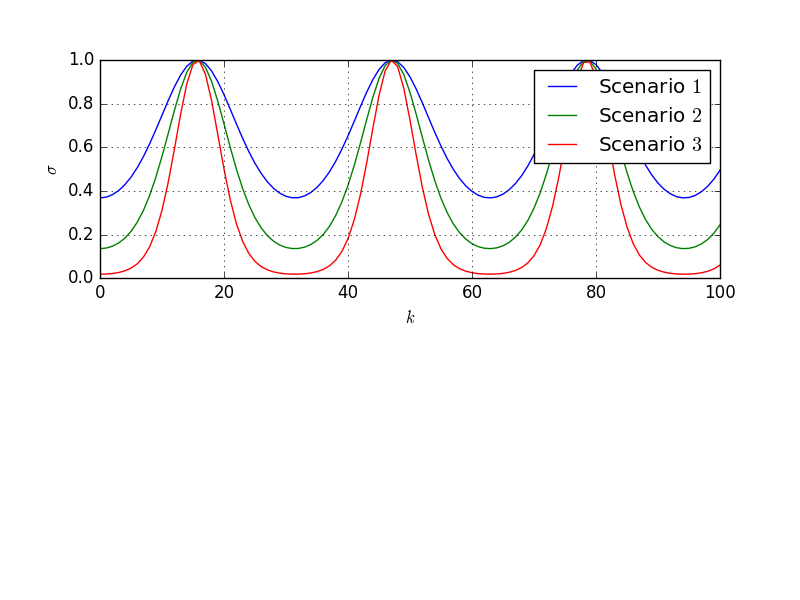}
		\caption{Periodic probability of having a successful communication for system $1$. System $2$ has the same probability time evolution but shifted by half a period.}
		\label{fig:prob}
	\end{center}
\end{figure}

%
%
%
%
\section{Conclusions and Future Research}
\label{sec:conclusions}

We have presented an approach for allocating communication between sensors and agents in the case of limited communication resources. The problem formulation minimises the Lyapunov function increase caused by imperfect communication and can incorporate knowledge on packet loss probability in order to improve the closed-loop cost.

In the case of perfect communication, some closed-loop solutions are periodic. This happens especially for small $M$, while for large amounts of subsystems this periodic behaviour does not appear. While it seems hard to prove that a periodic behaviour is optimal, the fact that the optimiser does not return a periodic solution could be explained that, due to nonconvexity, only locally optimal solutions are obtained. If periodic solutions are sought, the initial constraint in Problem~\eqref{eq:allocation_Nsa} can be replaced by a periodicity constraint and the problem can be solved for different period lengths in order to select the solution with the lowest cost. 

Moreover, in a fully distributed setting, the proposed approach can be used in combination with the approach of~\cite{Mamduhi2015}: the offline deployment of our framework can give indications on how to select the tuning parameters of that algorithm, i.e. the quadratic cost the relative threshold.

\begin{figure}
	\begin{center}
		\includegraphics[width=0.48\textwidth]{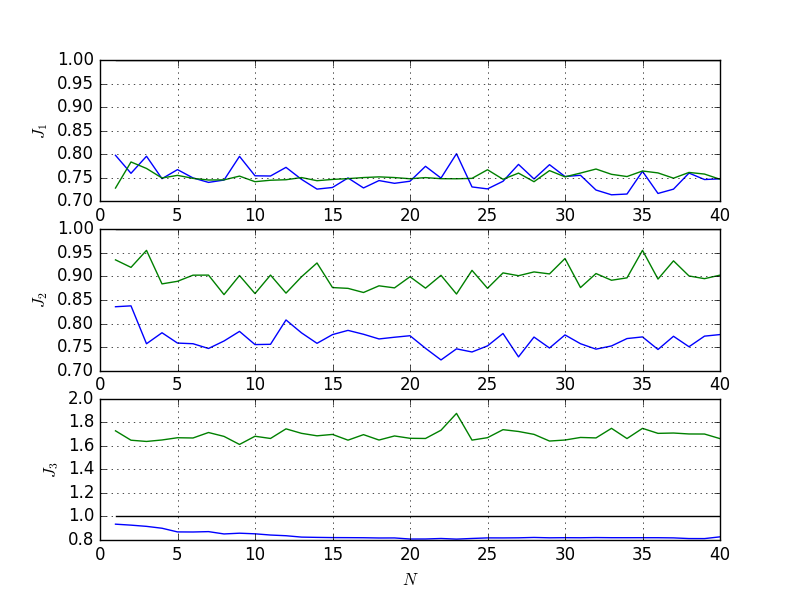}
		\caption{Average closed-loop cost obtained by assuming perfect communication (green) and by explicitly accounting for the packet drop probability (blue) as a function of the prediction horizon $N$ normalised with respect to the cost obtained with the baseline strategy (black). Top graph: scenario 1. Middle graph: scenario 2. Bottom graph: scenario 3. }
		\label{fig:cost_lossy}
	\end{center}
\end{figure}

\bibliographystyle{IEEEtran}
\bibliography{bibliography}
%
%
%
%
\end{document}